\documentclass[12pt,a4paper,twoside]{amsart}
\usepackage{amsmath}
\usepackage{amssymb}
\usepackage{amsfonts}
\usepackage{a4}
\usepackage{amscd}
\numberwithin{equation}{section}

\date{}

\def\ol{overline}
\def\k{\kappa}
\def\suml{\sum\limits}

\def\Ker{\text{\rm Ker}}

\def\bbr{\mathbb{F}}
\def\bbf{\mathbb{R}}
\def\bbp{\mathbb{P}}
\def\bbz{\mathbb{Z}}
\def\bbn{\mathbb{N}}
\def\e{\epsilon}
\def\half{\frac12}

\newtheorem{theorem}{Theorem}[section]
\newtheorem*{theorem*}{Theorem}
\newtheorem{lemma}[theorem]{Lemma}
\newtheorem*{lemma*}{Lemma}
\newtheorem{proposition}[theorem]{Proposition}

\theoremstyle{definition} 
\newtheorem{definition}[theorem]{Definition}
\newtheorem*{definition*}{Definition}
\newtheorem*{thm1*}{Theorem A1}
\newtheorem*{thm2*}{Theorem A2}
\newtheorem*{claim1*}{Claim 1}
\newtheorem*{claim2*}{Claim 2}
{Corollary}
\newtheorem*{conjecture*}{Conjecture}

\newtheorem*{remark*}{Remarks}
\newtheorem*{remark8.4}{Remark 8.4}
\def\bbz{\mathbb{Z}}
\def\bbq{\mathbb{Q}}
\def\bbf{\mathbb{F}}
\def\bbr{\mathbb{R}}

\def\bbc{\mathbb{C}}

\def\calp{\mathcal{P}}
\def\calo{\mathcal{O}}

\def\calh{\mathcal{H}}
\def\cals{\mathcal{S}}

\def\calr{\mathcal{R}}

\def\ol{\overline}

\theoremstyle{remark}  

\newtheorem*{thmls*}{Theorem (Liebeck-Shalev)}

\begin{document}

\title{Dimension Expanders}

\author{Alexander Lubotzky}
\address{
Institute of Mathematics\\
Hebrew University \\
 Jerusalem 91904 ISRAEL}
\email{alexlub@math.huji.ac.il}

\author{ Efim Zelmanov}
\address{
Department of Mathematics\\
University of California at San-Diego (UCSD)\\
La Jolla, CA  92093-0112 USA} \email{ezelmano@math.ucsd.edu}

\thanks{This research was supported by grants from the NSF and the
BSF (US-Israel Binational Science Foundation).}


\maketitle

\smallskip \centerline{\emph{ In memory of Walter Feit}}

\smallskip

\baselineskip 13pt
\begin{abstract}
We show that there exists $k \in \bbn$ and $0 < \e \in\bbr$ such
that for every field $F$ of characteristic zero and for every $n
\in \bbn$, there exists explicitly given linear transformations
$T_1,\dots, T_k: F^n \to F^n$ satisfying the following:

For every subspace $W$ of $F^n$ of dimension less or equal $\frac
n2$, $ \dim(W+\suml^k_{i=1} T_iW) \ge (1+\e) \dim W$.  This
answers a question of Avi Wigderson [W].  The case of fields of
positive characteristic (and in particular finite fields) is left
open.
\end{abstract}

\section{Introduction}

A finite $k$-regular graph $X = (V, E)$ with $n$ vertices is
called $\e$-expander (for $0 < \e \in\bbr)$ if for every subset of
vertices $W \subset V$ with $|W|\leq \frac{|V|}{2}, |N(W) | \ge(1+\e) | W|$ where $N(W) = \{ y
\in V| \; \; {\rm distance} (y, W) \le 1\}$.

Most $k$-regular graphs $X$ (when $k$ is even then  all graphs) are
obtained as Schrier graphs of groups (see e.g. [Lu2, (5.4)]) so
there is a finite group $G$ and a symmetric set $S$ of generators
of it of size $k$ acting on $V$ such that the graph structure of
$X$ is obtained by connecting $x\in V$ to $sx, s \in S$.  An
expander graph can be thought of as a permutational representation
of a group $G$, with a set of generators $S=\{s_1,\dots, s_k\}$,
acting on a set $V$ and satisfying for every subset $W$ of size at
most $\frac{|V|}{2}$, $|W \cup\bigcup\limits_{i=1}^k s_i W| \ge 
(1 + \e) | W|.$

Motivated by some considerations from theoretical computer
science, a notion of dimension expanders was suggested by [BISW]
where linear represenations replace permutational
representation.

\begin{definition}
Let $F$ be a field, $k \in \Bbb N$, $\epsilon >0$, $V$ a vector
space of dimension $n$ over $F$ and $T_1, \ldots, T_k$ $F$-linear
transformations from $V$ to $V$.  We say that the pair $(V,
\{T_i\}_{i=1}^{k})$ is an \emph{$\epsilon$-dimension expander} if for every 
subspace  $W$ of dimension less or equal $\frac n2$, $\dim(W+
\suml^k_{i = 1} T_iW) \ge (1+\e)\dim W.$
\end{definition}

It is not difficult to prove that whenever there is a meaningful
measure/probability on $F$ (and hence also on $M_n(F)$ and
$GL_n(F)$) a random choice of $T_1,\dots, T_k$ will give
$\e$-dimension expanders for suitable $k$ and $\e$ (see [Lu1,
Proposition 1.2.1,\break p. 5] for an analogue result for graphs).

In [W], Wigderson posed the problem of
finding, for a fixed field $F$ and for some fixed $k$ and
$\epsilon$, $\epsilon$-dimension expanders of arbitrarily
large dimension.  He suggested there that the set of linear
transformations defined by any irreducible representation
evaluated at expanding generators of the underlying finite
group, gives rise to a dimension expander over the complex
numbers and possibly over finite fields as well. 

Our main result is:

\begin{theorem}
There exist $k \in\bbn$ and $0 < \e \in\bbr$ such that for every
field $F$ of characteristic zero and for every $n$, there are
explicitly given $T_i: F^n \to F^n$, $ i = i, \dots, k$ such that
$(F^n,\{ T_i\}^k_{i = 1})$ is an $\e$-dimension expander.
\end{theorem}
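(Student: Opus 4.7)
The plan is to take $T_1, \dots, T_k$ to be integer matrices obtained from a fixed Kazhdan group $\Gamma$ acting by a linear representation. A natural choice is $\Gamma = SL_3(\bbz)$ with its standard elementary generating set $S=\{s_1,\dots,s_k\}$, since $(\Gamma,S)$ has property $(T)$ with an explicit lower bound on the Kazhdan constant.

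The core lemma to establish over $\bbc$ is: if $\rho:\Gamma\to GL(V)$ is an irreducible complex representation of dimension $n$, then $(V,\{\rho(s_i)\})$ is an $\e$-dimension expander with $\e$ depending only on the Kazhdan constant of $(\Gamma,S)$ and not on $\rho$. The mechanism is to pass from subspaces to projections. A subspace $W\le V$ corresponds to the orthogonal projection $P_W\in\mathrm{End}(V)\simeq V\otimes V^*$, and $\Gamma$ acts on $\mathrm{End}(V)$ by conjugation, sending $P_W$ to $P_{\rho(\gamma)W}$. If $\dim(W+\suml_i \rho(s_i)W)<(1+\e)\dim W$, then each $\rho(s_i)W$ lies close to $W$, so $\rho(s_i)P_W\rho(s_i)^{-1}$ is close to $P_W$ in Hilbert-Schmidt norm. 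Property $(T)$ then forces $P_W$ to be close to a $\Gamma$-invariant element of $\mathrm{End}(V)$, which by Schur's lemma is a scalar multiple of the identity. But a rank-$d$ projection with $d\le n/2$ has normalized Hilbert-Schmidt distance at least $1/\sqrt{2}$ from any scalar matrix, giving the desired contradiction.

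To pass from $\bbc$ to an arbitrary field $F$ of characteristic zero, I would invoke the Lefschetz principle. For each fixed $n$, being an $\e$-dimension expander is a first-order statement in the language of fields: $d$-dimensional subspaces are parametrized by $n\times d$ matrices of rank $d$ (rank being expressible via minor equations), and the condition $\dim(W+\suml T_iW)\ge(1+\e)d$ is a rank condition on an augmented matrix. Hence, verifying the property over $\bbc$ with $T_i$ chosen with rational entries (coming from an integral representation of $\Gamma$) is enough to deduce it over every characteristic-zero $F$.

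The main obstacle is accommodating \emph{every} $n$ with uniform $k$ and $\e$. Irreducible representations of $SL_3(\bbz)$ occur only in sparse dimensions (for instance $\binom{m+2}{2}$ for the symmetric powers of the standard representation), so $F^n$ cannot in general be taken to be an irreducible $\Gamma$-module. The likely workaround is either a relative-property-$(T)$ construction using pairs such as $(SL_3(\bbz)\ltimes\bbz^3,\bbz^3)$, which furnishes linear actions of arbitrarily high dimension with a uniform spectral gap, or a direct-sum construction combined with a rigidity argument controlling subspaces that do not respect the isotypic decomposition. Producing a single explicit family $\{T_i\}_{i=1}^k$ that yields an $\e$-dimension expander in every $n$ is the crux of the argument.
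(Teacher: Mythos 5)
Your core lemma is, in substance, the paper's Proposition 2.1, and your transfer from $\bbc$ to an arbitrary characteristic-zero field (first-order expressibility of the rank conditions) is a legitimate variant of the paper's argument, which instead embeds the finitely generated subfield containing a putative counterexample into $\bbc$. However, two genuine gaps remain. First, you state the core lemma for an arbitrary irreducible complex representation, but your proof uses orthogonal projections and the Hilbert--Schmidt norm, which presuppose a $\rho(\Gamma)$-invariant inner product; the lemma is therefore only established for \emph{unitarizable} $\rho$. This is fatal for your choice $\Gamma=SL_3(\bbz)$ with integral matrices: the rational irreducible representations of $SL_3(\bbz)$ are not unitarizable, and whether they give dimension expanders is precisely what the paper leaves open (Remark/Question 3.1), while its finite-dimensional \emph{unitary} irreducibles factor through finite congruence quotients and so again occur only in sparse dimensions.

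Second, as you yourself concede, you never produce transformations in every dimension $n$; you only name this as ``the crux,'' and neither suggested workaround closes it. The pair $(SL_3(\bbz)\ltimes\bbz^3,\bbz^3)$ yields, via finite quotients, irreducible representations whose dimensions (orbit sizes of $SL_3(\bbz/m)$ on characters of $(\bbz/m)^3$, etc.) are again a sparse set, not all of $\bbn$. A direct sum $V=V_1\oplus\cdots\oplus V_r$ of irreducibles fails outright: taking $W=V_1$ (of dimension at most $\tfrac12\dim V$) gives an invariant subspace with zero expansion, and Schur's lemma no longer confines the $adj\,\rho$-invariant operators to scalars, so the projection argument collapses. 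The paper resolves the crux by letting the group vary with $n$: by Kassabov's theorem the symmetric groups $\Sigma_{n+1}$ are expanders with respect to explicit generating sets of uniformly bounded size and uniform Kazhdan constant, and the standard irreducible $n$-dimensional representation on $\ell^2_0(\{1,\dots,n+1\})$ is rational and unitary, so Proposition 2.1 applies in every dimension with uniform $k$ and $\e$. Without an ingredient of this kind (or the $SL_2(p)$-on-$\bbc^p$ example for prime dimensions, which still misses most $n$), your argument does not prove the theorem.
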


In fact we confirmed Wigderson's suggestion over the complex
numbers but we also show that over finite fields the situation is
more delicate.  To put it on a wider perspective let us recall
that the standard method to get explicit constructions of expander
graphs is by showing that the induced unitary representation of
the group $G$ on the space $\ell^2_0 (V) $ is ``bounded away from
the trivial representation" in the sense of the Fell topology of
the unitary dual of $G$ (see [Lu1, \S 3]).  Here $\ell^2_0(V)$ is
the space of all functions from the set of vertices $V$ to $\bbc$.
We will show in \S 2, that the complex vector space $V=\bbc^n$
becomes a dimension expander with respect to a set of
generators $S$ of $G$, when $G$ acting \emph{unitarily} and
\emph{irreducibly} on $V$, if the adjoint (unitary) representation
of $G$ on $\cals \ell_n(\bbc) = \{ A \in M_n (\bbc)| {\rm trace} A
= 0\}$ is bounded away from the trivial representation.  This
crucial observation enables us to use the known results and
methods developed to construct expander Cayley graphs, to
construct also dimension expanders over $\Bbb C$.  The extensions to
an arbitrary characteristic zero field $F$ is then standard (see
\S3 below).

The use of unitarity is very unfortunate and is the main obstacle
for extending our results to positive characteristic. In fact,
Example 4.IV below implies that there are expander groups whose
linear representations over finite fields are not dimension
expanders.  We still should mention that the finite examples one
may deduce from Example 4.IV are of groups whose order is
divisible by $p$ and the represenations are over $\Bbb F_{p}$.
It may still be that in the ``non-modular'' case, i.e., the
order of the group is prime to the characteristic, Wigderson
suggestion holds even over finite fields.

As of now, the only results over finite fields we are aware of
are of Dvir and Shpilka [DS] who constructed explicit dimension
expanders (of constant expansion) over $GF(2)$ in dimension $n$
with $O(\log n)$ transformations.  They can also expand the
dimension by a smaller factor $1+O(1/ \log n)$ explicitly using
a constant number of transformations. 

Finally, let us relate the results of this note to our work on
``algebras with property $\tau$" [LuZa]: Recall that a group
$\Gamma$ generated by a finite set $S$ is said to have property
$\tau$ if there exists $\e > 0$ such that whenever $\Gamma$ acts
transitively on a finite set $V$ the resulting graph (when $x \in
V$ is connected to $s x, s \in S$) is an $\e$-expander.  The
notion of dimension expanders calls for defining an $F$-algebra
$A$ with a finite set of generators $S$ to have property $\tau$ if
there exists $\e > 0$ such that for every irreducible
representation $\rho$ on a finite dimensional $F$-dimension
$V$, the pair $(V, \rho(S))$ is $\e$-dimension expander.  We
should note however that we do not know in general that for a
group $\Gamma$ with property $\tau$ (or even $T$) the group
algebra $\bbc[\Gamma]$ has $\tau$ (and it looks unlikely - see
examples 4.V and 4.VI). We know this only with respect to
representations of $\bbc[\Gamma]$ which are unitary on $\Gamma$.
So we do not know whether $\bbc[SL_3(\bbz)]$ has $\tau$.  We do
know, though, that $\bbc [SL_3(\bbf_p[t])]$ has $\tau$.  But to
see the delicacy of the issue we show that
$\bbf_p[SL_3(\bbf_p[t])]$ does not have $\tau$, while we do not
know the answer for $\bbf_\ell [SL_3(\bbf_p[t])]$ for a prime
$\ell$ different than $p$.  We elaborate on this in \S 4, leaving
the full treatment to [LuZa] where connections with the
Golod-Shafarevich theory and some questions originated in
3-manifold theory are studied.

We take the opportunity to thank A. Jaikin-Zapirain and A.
Wigderson for useful suggestions.
 
\section{The adjoint representation}

Let $\Gamma$ be a group generated by a finite set $S$ and $(\calh,
\rho: \Gamma \to U(\calh))$ a unitary representation of $\Gamma$.
The Kazhdan constant $K = K^S_\Gamma ((\calh, \rho))$ is defined
as:
\[
\inf\limits_{0 \neq v \in \calh} \mathop{\max}\limits_{s \in S} \;
\left\{ \frac{\| \rho(s) v - v\|}{\| v\|}\right\}
\]

Recall that the group $\Gamma$ is said to have property $T$ if
$K^S_\Gamma = \inf\limits_{(\calh, \rho) \in \calr_0(\Gamma)}
K^S_\Gamma ((\calh, \rho))>  0$ when $\calr_0(\Gamma)$ is the
family of all unitary representations of $\Gamma$ which have no
non-trivial $\Gamma$-fixed vector.  In this case, $K^S_\Gamma$ is
called the Kazhdan constant of $\Gamma$ with respect to $S$.
Similarly $\Gamma$ is said to have property $\tau$ if
$K^S_\Gamma(\tau) = \inf\limits_{(\calh, \rho)
\in\calr^f_0(\Gamma)} K^S_\Gamma(\calh, \rho) > 0$ when
$\calr^f_0(\Gamma)$ is the subset of $\calr_0(\Gamma)$ of all
representations for which $\rho(\Gamma)$ is finite. The number
$K^S_\Gamma(\tau)$ is the $\tau$-constant of $\Gamma$.

If $\calh$ is a finite dimensional space, say $\calh = \bbc^n$ and
$\rho$ a unitary representation $\rho: \Gamma \to U(\calh) =
U_n(\bbc)$, then it induces a representation   $adj \rho$ on
$Hom(\calh, \calh)\simeq M_n(\bbc)$ defined by $adj\rho(\gamma)(T)
= \rho(\gamma)T\rho(\gamma)^{-1}$ for $\gamma \in\Gamma$ and $T
\in M_n(\bbc)$.  The subspace $\cals\ell_n(\bbc)$ of all the
linear transformations (or matrices) of trace $0$ is invariant
under $adj \rho$.  If $\rho$ is irreducible then by Schur's Lemma,
$\cals\ell_n(\bbc)$ does not have any non-trivial $adj
\rho(\Gamma)$-fixed vector.

The space $Hom (\calh, \calh)$ is also a Hilbert space when one
defines for $T_1, T_2 \in Hom (\calh, \calh), \; \; \langle T_1,
T_2\rangle = tr T_1T^*_2$, and $adj \rho$ is a unitary
representation on it and on its invariant subspace
$\cals\ell_n(\bbc)$.

\begin{proposition}
If $\rho:\Gamma \to U_n(\bbc)$ is an irreducible unitary
representation, then $V =\bbc^n$ is an $\e$-dimension expander
for $\rho(S)$ where $\e = \frac{\k^2}{12}$,  $\k =
K^S_\Gamma(\cals\ell_n(\bbc), adj\rho)$.
\end{proposition}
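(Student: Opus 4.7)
The plan is to translate the dimension expansion property into a statement about how the orthogonal projection onto $W$ moves under the adjoint representation, and then to invoke the Kazhdan inequality for the adjoint representation.

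Given a subspace $W \subseteq \bbc^n$ of dimension $d \le n/2$, I would consider its orthogonal projection $P_W \in M_n(\bbc)$. Since $\operatorname{tr}(P_W) = d$, the matrix $A = P_W - \frac{d}{n} I$ lies in $\cals\ell_n(\bbc)$, and it is nonzero because $P_W$ is not a scalar. Using the Hilbert–Schmidt inner product $\langle T_1, T_2\rangle = \operatorname{tr}(T_1 T_2^*)$, a direct computation gives $\|A\|^2 = \operatorname{tr}(A^2) = d - d^2/n = d(1 - d/n) \ge d/2$, since $d \le n/2$.

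Next I would invoke the Kazhdan constant: because $A \neq 0$ and $\k = K^S_\Gamma(\cals\ell_n(\bbc), adj\rho)$, there is some $s \in S$ with $\|adj\rho(s)(A) - A\|^2 \ge \k^2 \|A\|^2 \ge \k^2 d/2$. Since $\rho(s)$ is unitary, conjugation sends projections to projections of the image: $adj\rho(s)(P_W) = \rho(s) P_W \rho(s)^{-1} = P_{\rho(s)W}$. The scalar part $\frac{d}{n} I$ is fixed, so $adj\rho(s)(A) - A = P_{sW} - P_W$, where $sW = \rho(s) W$.

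The core geometric step is to bound $\|P_{sW} - P_W\|^2$ from above by $2(\dim(W+sW) - \dim W)$. Expanding, $\|P_W - P_{sW}\|^2 = 2d - 2\operatorname{tr}(P_W P_{sW})$. Choosing an orthonormal basis $e_1,\dots,e_d$ of $W$ whose first $k = \dim(W \cap sW)$ vectors span $W \cap sW$, one has $\operatorname{tr}(P_W P_{sW}) = \sum_{i=1}^d \|P_{sW} e_i\|^2 \ge k$, because the first $k$ terms equal $1$ and the rest are nonnegative. Therefore $\|P_W - P_{sW}\|^2 \le 2(d - k) = 2(\dim(W+sW) - \dim W)$. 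Combining with the Kazhdan lower bound yields
\[
\dim(W + sW) - \dim W \ge \tfrac{1}{2}\|P_W - P_{sW}\|^2 \ge \tfrac{\k^2}{4} d,
\]
and since $W + \sum_i T_i W \supseteq W + sW$, the required inequality $\dim(W + \sum_i T_i W) \ge (1 + \e)\dim W$ follows (with $\e$ at least $\k^2/4$, comfortably giving the stated constant $\k^2/12$).

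The only delicate point is the upper bound $\operatorname{tr}(P_W P_{sW}) \ge \dim(W \cap sW)$ used in the last step; everything else is formal once one observes that $adj\rho$ is unitary with respect to the Hilbert–Schmidt norm and that $P_W$ minus its tracial part is the correct test vector.
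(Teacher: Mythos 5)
Your proof is correct, and it follows the paper's overall strategy (test vector $Q = P_W - \tfrac{d}{n}I$ in $\cals\ell_n(\bbc)$, the norm computation $\|Q\|^2 = d - d^2/n$, the Kazhdan inequality producing some $s\in S$, and the identification $adj\rho(s)(Q) = P_{sW} - \tfrac{d}{n}I$), but it replaces the paper's key Lemma with a genuinely different and sharper estimate. The paper decomposes $V = (W\cap W')\oplus U_1 \oplus U_2 \oplus (W+W')^\perp$ and only uses $|\langle PP'\beta_i,\beta_i\rangle|\le 1$ on the middle summands, arriving at $\mathrm{Re}\,\langle P,P'\rangle \ge 4d - 3\dim(W+W')$; you instead compute $\mathrm{tr}(P_W P_{sW}) = \sum_i \|P_{sW}e_i\|^2$ over an orthonormal basis of $W$ adapted to $W\cap sW$ and drop only nonnegative terms, getting $\mathrm{tr}(P_W P_{sW}) \ge \dim(W\cap sW) = 2d - \dim(W+sW)$. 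Since $\dim(W+sW)\ge d$, your bound dominates the paper's, and it converts into the cleaner inequality $\|P_W - P_{sW}\|^2 \le 2\bigl(\dim(W+sW)-\dim W\bigr)$, yielding expansion constant $\k^2/4$ rather than the paper's $\k^2/12$ (so the stated $\e=\k^2/12$ holds a fortiori). All the supporting steps check out: $Q\ne 0$ because a projection of rank $0<d<n$ is not scalar, $adj\rho$ is unitary for the Hilbert--Schmidt inner product, and $\mathrm{tr}(PP')\ge 0$ is automatic. One cosmetic slip: in your closing sentence you call $\mathrm{tr}(P_WP_{sW})\ge\dim(W\cap sW)$ an ``upper bound''; it is of course a lower bound on the trace, which then gives the upper bound on $\|P_W-P_{sW}\|^2$.
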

\begin{proof}
Let $W \le V$ be a subspace of dimension $m \le \frac n2$.  Let
$P$ be the linear projection from $V$ to $W$.  As $P^* = P$, we
have that $\langle P, P\rangle = tr(P^2) = tr (P) = m$.  The right
hand equality is seen simply by considering a basis which is a
union of a basis of $W$ and of $W^\perp$.  Consider $Q = P - \frac
mn I$ where $I$ is the identity operator.  Then
\[
tr Q = tr P - tr(\frac mn I) = 0,\quad \,  {\rm so} \; \; Q \in
\cals\ell_n(\bbc).
\]
The norm of $Q$ is:
\begin{align*}
&\| Q\|^2= tr ((P - \frac mn I)(P-\frac mn I)^*) = \\
=&tr (P^2 - 2\frac mn P + \frac{m^2}{n^2}I) = m - 2 \frac{m^2}{n}
+ \frac{m^2}{n^2} n = m-\frac{m^2}{n}.
\end{align*}

It follows now that there exists $s\in S$ such that $\| \rho(s)
(Q) - Q\|^2 \ge \kappa^2\|Q\|^2$.  Notice that $\rho(s)(Q) =
\rho(s)Q\rho(s)^{-1} = \rho(s) P\rho(s)^{-1} - \frac mn I = P' -
\frac{tr P'}{n} I$ where $P' = \rho(s)P\rho(s)^{-1}$ is the
projection of $V$ onto the subspace $W' = \rho(s)(W)$, so $tr P' =
m$ as well.
\begin{lemma}
If $W, W'$ are two subspaces of $V$ of dimension $m$ and $P, P'$ the 
projections to $W$ and $W'$, respectively, then $Re<P, P'> \geq
4m-3 dim(W+W')$. \end{lemma}

\begin{proof}
Denote $U_0 = W\cap W', U_1 = U^\perp_0 \cap W,$ and $U_2 =
U^\perp_0
\cap W'$ and $d_i = dim U_i$.  Then $V=U_0 \oplus U_1 \oplus U_2
\oplus(W+W')^{\perp}$.

Let us choose an orthonormal basis for $V$ compose of
$\{\alpha_1, \ldots, \alpha_{d_{0}}\}$ an orthonormal basis for
$U_0, \{\beta_1 , \ldots, \beta_{d_{1}+d_{2}}\}$ an orthonormal
basis for $U_1 \oplus U_2$ and $\{\gamma_1 ,  \ldots,
\gamma_r\}$, $r=dim V-(d_0 +d_1 +d_2)$, an orthonormal basis for
$(W+W')^{\perp}$.  The operator $PP'$ is the identity on $U_0$
and zero on $(W+W')^{\perp}$.  It is a \break linear transformation of
$norm \leq 1$, so $|<PP'\beta_i , \beta_i >|\leq 1$ and hence: 

$$
-1 \leq Re <PP' \beta ,\beta_i> \leq 1. \leqno(*)
$$

The trace of $PP'$ on $U_0$ is $dim U_0$ and it is 0 on
$(W+W')^{\perp}$.  Together with (*) we get:

$$
Re \ tr(PP') \geq dim U_0 - dim U_1 - dim U_2 =
$$

$$
=dim(W \cap W')-(dim V - dim(W \cap W')-dim (W+W')^{\perp})=
$$

$$
=2 dim(W \cap W')-dim(W+W')=
$$

$$
=4m-3dim(W+W')
$$

The last equality follows from the fact that 
$$
dim(W\cap
W')=dimW+dimW'-dim(W+W')=
$$

$$
=2m-dim(W+W').
$$
\end{proof}

\smallskip

So altogether,
\begin{align*}
\k^2 (m-\frac{m^2}{n}) &\le\, \| \rho(s)(Q) - Q\|^2 = \| \rho(s)
P\rho(s^{-1}) - P\|^2 = \| P'-P\|^2\\
&=\, \langle P'-P,\, P'-P\rangle = \langle P', P'\rangle +\langle
P, P\rangle  - \langle
P, P'\rangle - \langle P', P\rangle \\
&=\, 2m-2Re<P,P'> \\
&\, \leq 2m-2(4m-3dim(W+W'))=6dim(W+W')-6m,.
\end{align*}

and therefore

$$
m\Big(1+\frac{\kappa^{2}}{6}\Big(1-\frac{m}{n}\Big)\Big) \leq dim(W+W').
$$

As $1-\frac{m}{n} \geq \frac{1}{2}$, we get that $ dim W(1+
\frac{\kappa^{2}}{12}) \leq dim(W+W')$ and Proposition 2.1 is now 
proved.
\end{proof}

\medskip

\noindent{\bf Remark/Question 2.5} \ \  For graph expanders or
equivalently, as in the introduction, for permutational
representation of $\Gamma$ on a set $V$, $V$ is an expander iff
the representation of $\Gamma $ on $\ell^2_0 (V)$ is bounded away
from the trivial representation (cf. [Lu1,  4.3]). Do we have such
a converse for Proposition 2.1?  I.e., if a $\bbc$-vector space
$V=\bbc^n$ is a dimension expander with respect to a unitary
representation $\rho$ of $\Gamma$, does it imply that the
representation $adj \rho$ on $\cals \ell_n(\bbc)$ is bounded away
from the trivial representation?

\section{Examples and a proof of Theorem 1.2}

There are many known examples of groups with property $T$ or
$\tau$. They can now, in light of Proposition 2.1, be used to
give a proof for Theorem 1.2, i.e. to give explicit sets of linear
transformations of $\bbc^n$ which  solve  Wigderson's Problem.

Let us take some of the examples which are the simplest to
present:

\medskip
\noindent{\bf 3.I} \ \  Fix $3 \le d \in \bbn$ and let $\Gamma =
SL_d(\bbz)$ with a fixed set of generators, e.g. $S = \{ A, B \}$
when $A$ is the transformation sending $e_1$ to $e_1 + e_2$ and
fixing $e_2,\dots, e_d$ and $B$ will send $e_i\to e_{i + 1}$ for
$i = 1,\dots, d-1$ and $e_d $ to $(-1)^{d-1} e_1$.  (Here
$\{e_1,\dots, e_d\}$ is the standard basis for $\bbz^d$).

As $\Gamma$ has Property $T$, whenever we take a finite
dimensional irreducible unitary representation $\rho$ of $\Gamma $
on $V=\bbc^n$, \  $\rho(A)$ and $\rho(B)$ make $V$ a dimension
expander.

\medskip

\noindent{\bf Remark/Question 3.1} \ \ Is this true also for the
non-unitary representations of $\Gamma$?  Note that, say
$SL_3(\bbz)$, has infinitely many irreducible rational
representations (i.e. rational representations of $SL_3(\bbc)$
restricted to the Zariski dense subgroup $SL_3(\bbc)$).  These
representations are classified by the highest weights of
$\cals\ell_3(\bbc)$.  Are they dimension expanders with respect
to $\rho(A)$ and $\rho(B)$?

\bigskip
\noindent {\bf 3.II} \ \  Fix $3\le d \in\bbn$ and a prime $p$ and
let $\Gamma = SL_d (\bbf_p[t])$.  This $\Gamma $ has $(T)$ and all
its representations over $\bbc$ factor through finite quotients
([Ma, Theorem 3, p.3]) which implies that they can be unitarized.
Thus unlike in example 3.I, we can deduce that all the irreducible
complex representations of $\Gamma$ give rise to dimension expanders.

\bigskip
\noindent {\bf 3.III} \ \  Let $\Gamma = SL_2(\bbz)$ and for a
prime $p$, denote $\Gamma (p) = \Ker (SL_2(\bbz) \to
SL_2(\bbz/p\bbz))$.  Let $A = \begin{pmatrix} 1 &1\\0
&1\end{pmatrix} $ and $B =\begin{pmatrix} 0 &1\\-1
&0\end{pmatrix}$ form a set of generators $S$ for $\Gamma$.  The
group $\Gamma$ has property $\tau$ with respect to the family $\{
\Gamma (p)\}$ -- see [Lu1, 4].

This means that there exists $\k>0$ such that for all unitary
representations $(V,\rho)$ of $\Gamma$ which factor through
$\Gamma /\Gamma (p) = SL_2(p)$ for some $p$ and do not have a
fixed point, $K^S_\Gamma (V,\rho) > \k$.  The group $SL_2 (p)$
acts on the $\bbf_p$-projective line $\bbp^1=\{0, \dots, p-1,
\infty\}$ via $\begin{pmatrix} a &b\\c &d\end{pmatrix} (z) =
\frac{az+b}{cz+d}$. This is a double transitive permutational
representations (and indeed give rise to expander graphs -- see
[Lu1, Theorem 4.4.2]). Moreover, it induces an irreducible linear
representation $\rho$ on $\ell^2_0 (\calp^1)\cong \bbc^p$.  The
adjoint representation of $\Gamma$ on $\cals\ell_p(\bbc)$ also
factors through $SL_2(p)$ and is therefore bounded away from the
trivial representation.  Hence $\bbc^p$ are dimension expanders
with respect to the explicitly given transformations $\rho(A) $
and $\rho(B)$.

\bigskip
\noindent{\bf 3.IV} \ \ \ Recently Kassabov ([K2]) showed that the
Cayley graphs of the symmetric groups $\Sigma_n$ are expanders
with respect to an explicitly given set $S_n$ of generators with
$|S_n| \le k $ for some $k$ ($k\le 30$ in his work).  This means
that there exists $\k > 0$ such that $K^{S_n}_{\Sigma_n} (V, \rho)
>\k$ for every $n$ and every representation $(V, \rho)
\in \calr_0 (\Sigma_n)$. This can be applied in particular to $adj
\rho_n$ when $\rho_n$ is the linear representation of $\Sigma_n$
on $\bbc^{n-1}\cong \ell^2_0 (\{ 1,\dots, n\})$ induced from the
natural permutational representation of $\Sigma_n$ on the set $\{
1, \dots, n\}$.  We  get this way that $\bbc^{n-1}$ are
dimension expanders for every $n$ with respect to $k$ explicit
linear transformations.

\medskip

Examples 3.III and 3.IV are especially useful for us as the
representations $\rho$ which appear in these examples are all
defined over $\bbq$. This enables us now to prove Theorem 1.2 over
every field of characteristic zero. Indeed, if $F$ is such a field
then $F$ contains $\bbq$ and so the examples 3.III and 3.IV give
finite representations $\rho$ into $GL_n (F)$ for various values
of $n$. Now if $|F|\le \aleph$, then $F$ can be embedded into
$\bbc$ and so $GL_n(F) \subset GL_n (\bbc)$ and as $\rho$ are
finite, they can be unitarized over $\bbc$.  Now, as $\bbc^n =
\bbc\mathop{\otimes}\limits_F F^n$, every $F$-subspace $W$ of
$F^n$ spans a $\bbc$-subspace $\ol W$ of $\bbc^n$ of the same
dimension. It is easy to see that for $\rho(s) \in GL_n(F)$,
$\dim_F (\rho(s) W+W) = \dim_\bbc (\rho(s)\ol W + \ol W)$ hence
$F^n$ are also dimension expanders.

This actually covers also the general case: if $F$ has  large
cardinality and $W\le F^n$ is a counter example, then the entries
of a basis of $W$ generate a finitely generated field $F_1$ and
the counterexample can already be found in $F^m_1$. This is
impossible by the previous paragraph - so Theorem 1.2 is now
proved for every field $F$ or characteristic zero.

\medskip

The above discussion may give the impression that the only way to
make $\bbc^n$ into dimension expander may be via unitary and
even finite representation.  This is not the case.  Let us
consider more examples:

\medskip \noindent{\bf 3.V} \ \  Fix $5\le d \in \bbn$ and $p$ a
prime. Let $f$ be the quadratic form $x^2_1 + \dots +x^2_{d-2} -
\sqrt{p}\; x^2_{d-1} -\sqrt{p}\;  x^2_d $ and let $\Gamma $ be the group
 $SO(\calo, f)$ of all the $d\times d$ matrices over $\calo$ of
 determinant $1$ preserving $f$, where $\calo$ is the ring of
 integers of the field $E=\bbq(\sqrt{p})$. The group $\Gamma$ is a 
Zariski dense subgroup of the $E$-algrebraic
group $H=SO(f)$.

Let $G=Res_{E/Q}(H)$ be the restriction of scalors from $E$ to
$\Bbb Q$ of $H$.  Then $\Gamma$ sits diagonally as a lattice the
real points of $G$ which is isomorphic to $SO(\Bbb R, f) \times SO(\Bbb
R, f^\iota)=SO(d-2,2) \times SO(d)$ (([Ma, Theorem 3.2.4])) and it
therefore has property $T$.  Here, $\iota$ is the unique non-trivial 
element of the Galois group $Gal(E/\Bbb Q)$.  

Every irreducible $E$-rational represenation
of $H$ define two real representations of $\Gamma$-one for each
of the two embeddings of $E$ into $\Bbb R$.  For an element $g$
of $\Gamma$ if one represenation sends it to a matrix $A$, the second
sends it to $A^{\iota}$ - i.e., applying $\iota$ to all the entries of
$A$.  It follows, that if one representation define
$\epsilon$-dimension expander so is the other.  Now, one of
these two representation factors through $SO(d)$ and hence it is
always unitary.  As $\Gamma$ has $T$, it follows that it defines
a dimension expander.  The other one is never unitary as its
real Zariski closer is $SO(d-2,2)$.  Still, we deduce that it is
a dimension expander.  By taking infinitely many such
$E$-rational represenation one sees that neither finiteness nor
unitarity are necessary for dimension expanders.         

It is our lack of other methods which forces us to use unitarity
(even in a non-unitary example).  It will be extremely
interesting and important to develop methods to build dimension
expanders not via unitarity.  This is exactly the obstacle that
should be overcome in order to answer Wigderson's problem for
fields of positive characteristic.

\section{Algebras with property $\tau$}

In this section we briefly comment on the connection between
dimension expanders and algebras with property $\tau$.  A
more systematic study of these algebras will be given in [LuZa].

Motivated by the definition of groups with $\tau$ and by some
problems on Golod-Shafarevich groups and 3-manifold groups, we
would like to have a notion of algebras with property $\tau$.
There are several reasonable options.  The most natural one is
probably:
\begin{definition}  Let $F$ be a field and $A$ an $F$-algebra
generated by a finite set $S$.  $A$ is said to have \emph{property
$\tau$} if there exists an $\e > 0$ such that for every
$(F-)$finite dimensional simple $A$-module $V$ and for every
subspace $W$ of $V$ of dimension most $\half \dim V,\,
\dim(W+\suml_{s\in S} s W) \ge (1+\e) \dim W$.  Namely, $V$ is an
$\e$-dimension expander with respect to $S$.
\end{definition}

It is not difficult to see that if $A$ has $\tau$ with respect to
$S$, it has it with respect to any other finite set of generators,
possibly with a different $\e$.

\bigskip
\noindent{\bf Examples}:

\bigskip

\noindent{\bf 4.I.} \ Let $A$ be the free algebra over $F$ on one
generator $x$, i.e. $A = F[x]$.  Assume $F$ has field extensions
of infinitely many different degrees, i.e., $\{ F_i\}_{i\in\bbn}$ are
finite field extensions with $n_i = [F_i: F]\ge i$.  So, $F_i =
F[x]/(f_i(x))$, when $f_i(x)$ is an irreducible polynomial of
degree $n_i$, is an irreducible $A$ module in a clear way. Let $W$
be the span of $x + (f_i(x)),\dots, x^{[\frac{n_i}{2}]-1} +
(f_i(x))$ in $F_i$.  Then $\dim (W+xW) \le \left(
1+\frac{2}{n_i}\right)\dim W$ and hence $A$ does not have property
$\tau$.

\bigskip

\noindent{\bf 4.II}.  \ \ On the other hand if $F$ is
algebraically closed, then every finite dimensional simple module
of $F[x]$ is one dimensional and hence $F[x]$ has $\tau$ in a
vacuous way.  (This is perhaps a hint that Definition 4.1 is maybe
not the most appropriate one).  But see also Example 4.VI below
for comparison.

\bigskip

\noindent{\bf 4.III} \ \  If $F$ as in 4.I and $d\in\bbn$ a fixed
integer, then by a similar argument $M_d(F[x])$ does not have
property $\tau$.

\bigskip

\noindent{\bf 4.IV} \ \  Let $p$ be a fixed prime, $3\le d
\in\bbn$ a fixed integer and $\Gamma = SL_d(\bbf_p[x])$.  It
follows from Example 3.II that the group algebra $\bbc[\Gamma]$
has property $\tau$ (and,  in fact, it follows that $F[\Gamma]$
has $\tau$ for every characteristic zero field.)  On the other
hand the algebra $\bbf_p[SL_d(\bbf_p[x])]$ is mapped
epimorphically onto $M_d(\bbf_p[x])$ via the natural embedding
$SL_d(\bbf_p[x] ) \subseteq M_d(\bbf_p[x])$.  The latter does not
have $\tau$ by 4.III.  Hence $\bbf_p[SL_d(\bbf_p[x])]$ does not
have $\tau$. We do not know if for a prime $\ell \neq p$,
$\bbf_\ell[SL_d(\bbf_p[x])]$ has $\tau$ or not.

\bigskip

\noindent{\bf 4.V} \ \  It was shown recently by Kassabov and
Nikolov [KN] that for $d \ge 3$, the group $\Gamma =
SL_d(\bbz[x])$ has property $\tau$.  Since $SL_d(\bbz[x])$ is
mapped onto $SL_d(\bbf_p[x])$ for every $x$, it follows that for
every $p, \bbf_p[\Gamma]$ does not have $\tau$.  We do not know if
$\bbc[SL_d(\bbz[x])]$ has $\tau$.  If it has, the same would apply
for $\bbc[SL_d(\bbz)]$ which would resolve Question 3.1.

\bigskip

\noindent{\bf 4.VI} \ \  Let $R = \bbz \langle x, y\rangle$ be the
free non-commutative ring on $x$  any $y$.  Fix $3 \le d \in\bbz$
and let $\Gamma = EL_d(R)$ -  the group generated by the
elementary unipotent matrices $ I + rE_{ij}$ inside $M_d(R)$, when
$r \in R$ and $1 \le i \neq j \le d$.  It was conjectured by
Kassabov [K1] that $\Gamma$ has $\tau$ and some partial results in
this direction are proved there . We claim that for every field
$F$, $F[\Gamma]$ - the group algebra of $\Gamma$ over $F$, does
not have property $\tau$. To see this, let us observe first that
$F\langle x, y\rangle = F\otimes_\bbz \bbz \langle x, y \rangle$
does not have $\tau$. Indeed, the latter would have implied that
there exists $\e > 0$ such that for any $n \in \bbn$ and any two
generators $a$ and $b$ of $M_n(F)$ and any subspace $W$ of $F^n$
with $\dim W\le \frac n2$, we have $\dim (W + a W +bW)\ge (1+\e)
\dim W$, which is clearly not the case.
 A general (easy)
argument shows that if an $F$-algebra $A$ does not have $\tau$
then $M_d(A)$ does not have it either (see [LuZa] for this and
more). Thus $M_d(F\langle x, y\rangle)$ does not have $\tau$.  The
algebra $F[\Gamma]$ is mapped onto $M_d (F\langle x, y\rangle )$
and so the same applies to it.

It is interesting to compare the last example with the results of
Elek ([E1], [E2]).  He defined  a notion of ``amenable algebra"
and proved that a group $\Gamma$ is amenable if and only if its
group algebra $\bbc[\Gamma]$ is amenable.

\medskip

The above examples illustrate the delicacy of finding algebras
with property $\tau$ over finite fields or similarly dimension
expanders.  The lack of unitarity is the main obstacle and the
main problem we leave open is to find a method replacing it for
fields of positive characteristic in general and finite fields in
particular.

\end{document}